\numberwithin{equation}{section}
\numberwithin{figure}{section}
\theoremstyle{plain}
\newtheorem{thm}{\protect\theoremname}
  \theoremstyle{remark}
  \newtheorem{rem}[thm]{\protect\remarkname}
  \theoremstyle{plain}
  \newtheorem{cor}[thm]{\protect\corollaryname}
  \providecommand{\corollaryname}{Corollary}
  \providecommand{\remarkname}{Remark}
\providecommand{\theoremname}{Theorem}
\begin{document}
\begin{onehalfspace}

\title{Solution of Sondow's problem: a synthetic proof of the tangency property
of the parbelos}
\end{onehalfspace}

\date{}

\begin{onehalfspace}

\author{Emmanuel Tsukerman}
\end{onehalfspace}
\maketitle
\begin{abstract}
In a recent paper titled \textit{The parbelos, a parabolic analog
of the arbelos}, Sondow asks for a synthetic proof to the tangency
property of the parbelos. In this paper, we resolve this question
by introducing a converse to Lambert's Theorem on the parabola. In the process, we prove some new properties of the parbelos.
\end{abstract}

\section{Introduction}

In a recent paper, Jonathan Sondow introduced the parbelos - a parabolic
analogue of the arbelos \cite{Parbelos}. One of the beautiful properties
of the parbelos is that the tangents at the cusps of the parbelos
form a rectangle, and that the diagonal of the rectangle opposite
the cusp is tangent to the upper parabola. Moreover, the tangency
point lies on the bisector of the angle at the cusp. Sondow asks for
a synthetic proof of these two properties of the tangent rectangle
of the parbelos, which he proves by analytic means. In this paper,
we present such a proof. We do so by introducing a converse to the
following Theorem of Lambert: the circumcircle of a triangle formed
by three tangent lines to the parabola passes through the focus of
the parabola. In the process of proving Sondow's tangent property,
we discover some new properties of the parbelos.

\begin{figure}[H]
\begin{raggedright}
\includegraphics[scale=0.32]{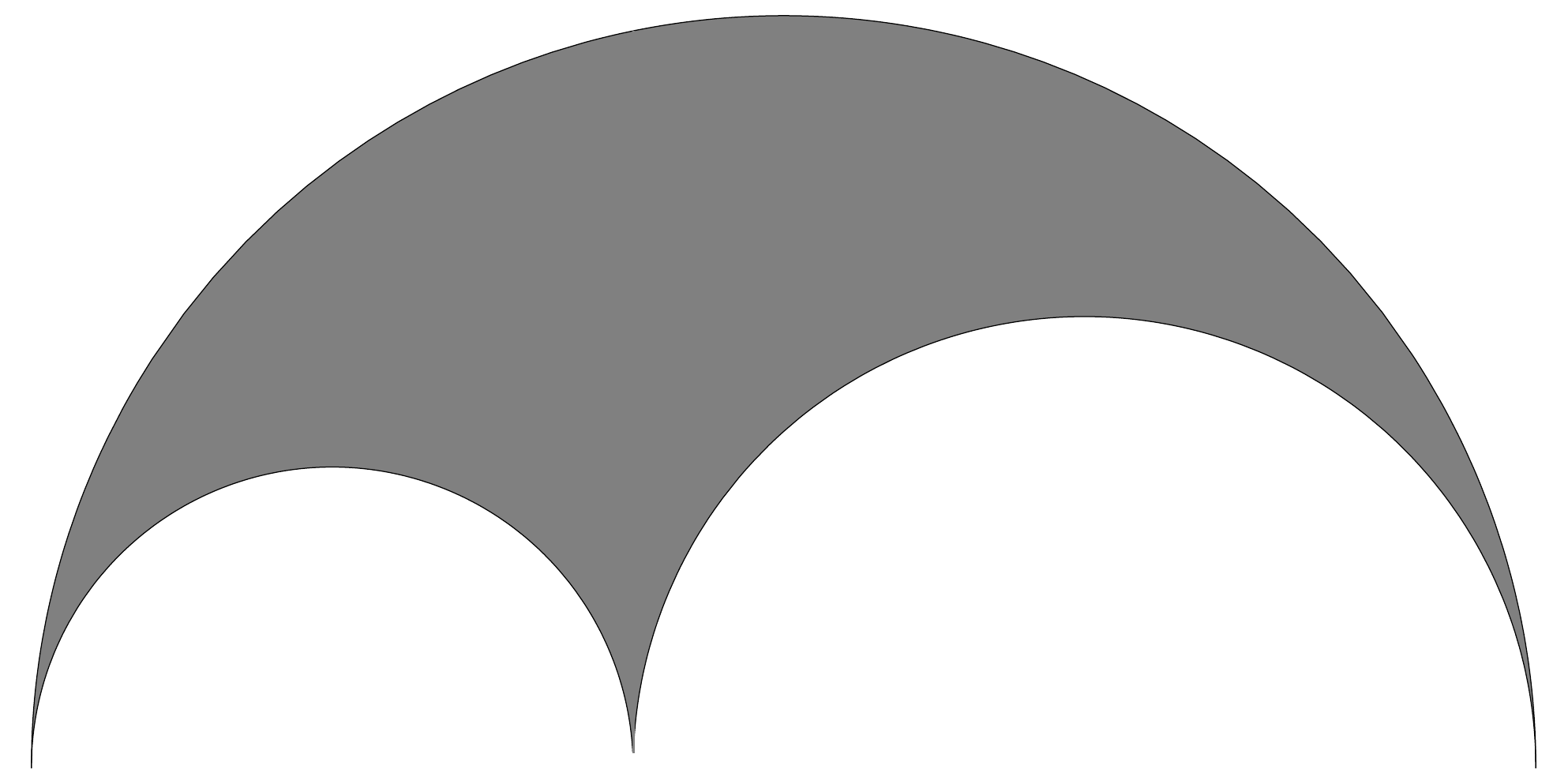}\includegraphics[scale=0.35]{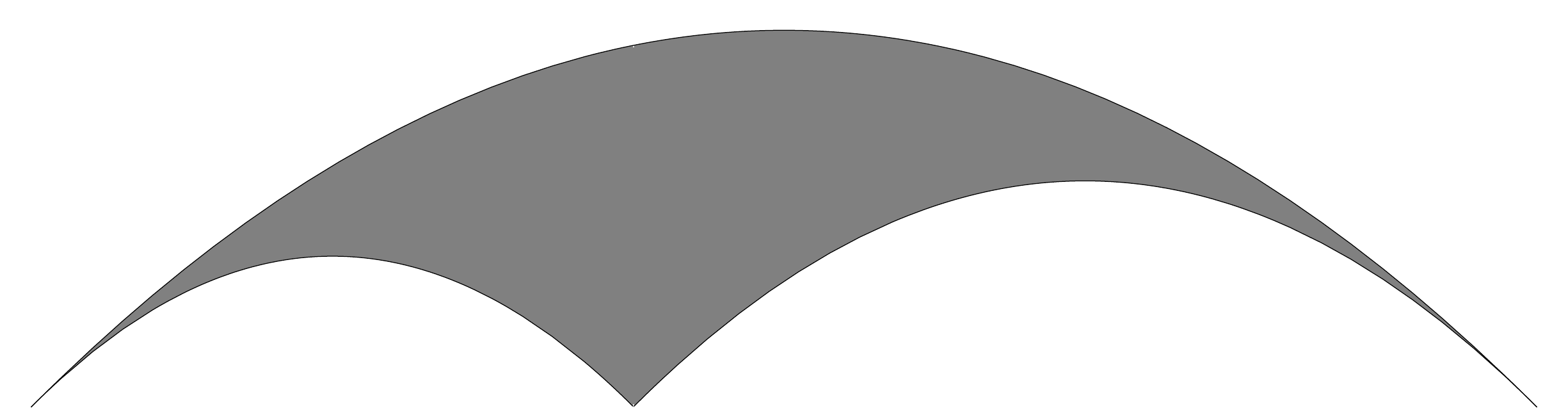}
\par\end{raggedright}

\caption{The arbelos and the parbelos.}
\end{figure}

\begin{figure}[H]
\begin{centering}
\includegraphics[scale=0.45]{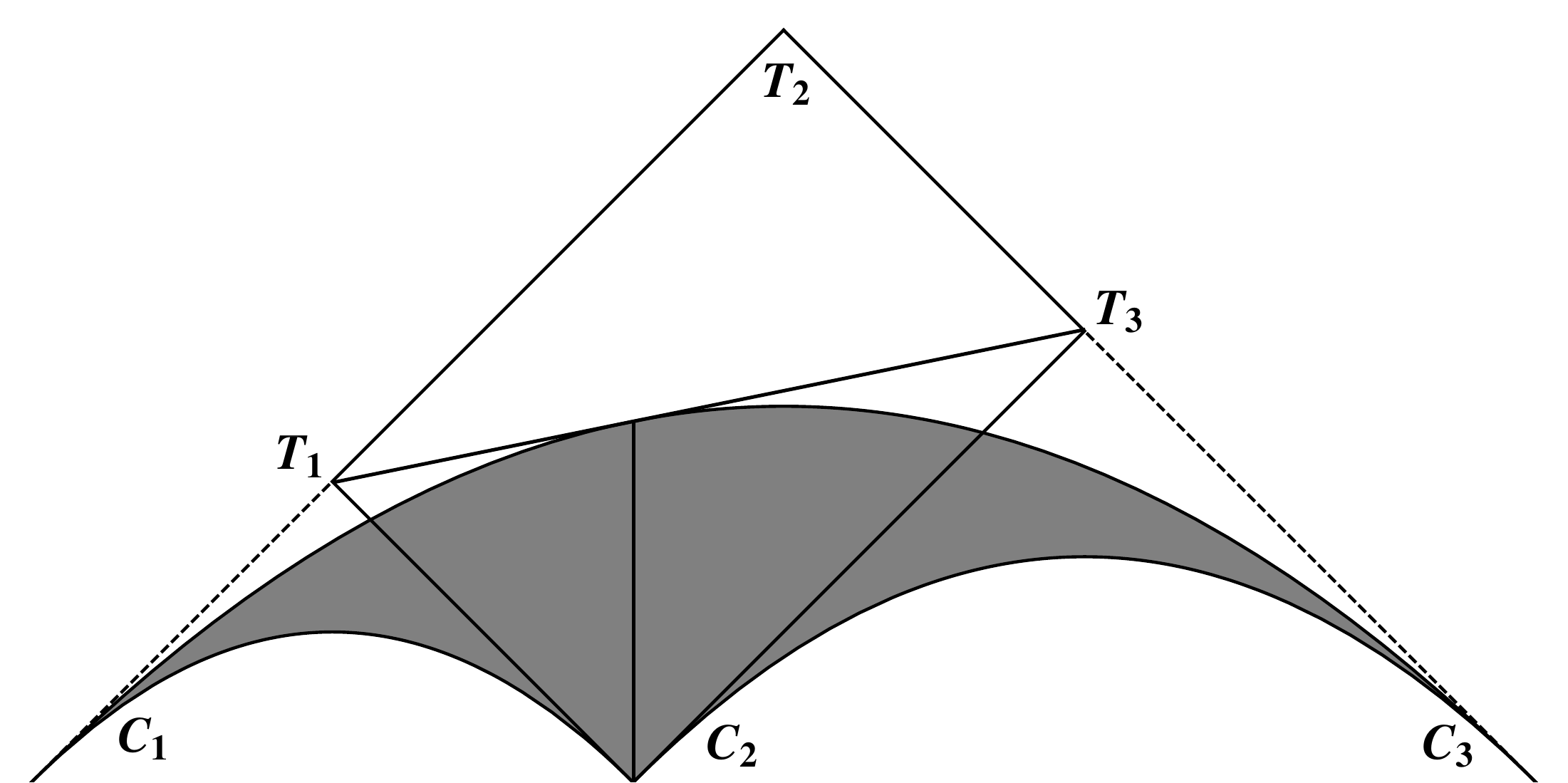}
\par\end{centering}

\caption{\label{fig:Sondow's-Tangency-Property:}Sondow's Tangency Property:
the diagonal $T_{1}T_{3}$ of the tangent rectangle $C_{2}T_{1}T_{2}T_{3}$
is tangent to the outer parabola. Moreover, the tangency point is
the intersection of the angle bisector of cusp $C_{2}$ with the outer
parabola.}
\end{figure}

\section{Preliminaries}

The classical Simson-Wallace Theorem is a useful tool in understanding
the parabola.
\begin{thm}
\label{thm:(Simson-Wallace-Theorem)-Given}(Simson-Wallace Theorem)
Given a triangle $\triangle ABC$ and a point $P$ in the plane, the
orthogonal projections of $P$ into the sides (also called pedal points)
of the triangle are collinear if and only if $P$ is on the circumcircle
of $\triangle ABC$ \cite{Geometry Revisited}. 
\end{thm}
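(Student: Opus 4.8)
The plan is to establish the two implications at once, as a chain of ``if and only if'' steps phrased with directed angles modulo $180^\circ$; this avoids the configuration-dependent case analysis that a naive synthetic argument would require. Write $X$, $Y$, $Z$ for the feet of the perpendiculars from $P$ to the lines $BC$, $CA$, $AB$ respectively.

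First I would record two auxiliary cyclic quadrilaterals. Because $\angle PXB = \angle PZB = 90^\circ$, the points $B,X,P,Z$ lie on the circle with diameter $PB$; likewise, because $\angle PXC = \angle PYC = 90^\circ$, the points $C,X,P,Y$ lie on the circle with diameter $PC$. Applying the inscribed-angle theorem (in directed-angle form) to these two circles, and using that $Z$ lies on line $AB$ while $Y$ lies on line $CA$, I get
\[
\angle(XZ,XP) \;=\; \angle(BZ,BP) \;=\; \angle(AB,BP), \qquad
\angle(XY,XP) \;=\; \angle(CY,CP) \;=\; \angle(AC,CP).
\]
Subtracting, $\angle(XZ,XY) = \angle(AB,BP) - \angle(AC,CP)$, so the pedal points $X,Y,Z$ are collinear (equivalently $\angle(XZ,XY)=0$, since both lines pass through $X$) if and only if $\angle(AB,BP) = \angle(AC,CP)$.

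To finish I would observe that this last equality is exactly the directed-angle criterion for the four points $A,B,C,P$ to be concyclic --- the chord $AP$ subtends equal directed angles at $B$ and at $C$ --- that is, for $P$ to lie on the circumcircle of $\triangle ABC$. Composing the equivalences yields the theorem.

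The conceptual content is light; the real work is bookkeeping. One must apply the inscribed-angle identities with consistent orientation, and deal with the degenerate positions of $P$ --- on one of the side lines, or at a vertex --- where some of $X,Y,Z$ coincide or one of the auxiliary circles collapses; in those cases the statement should be checked directly (and the collinearity is typically trivial). I expect this edge-case handling, together with getting the direction of every directed angle right, to be the only real obstacle. An alternative that avoids directed angles entirely is to place the circumcircle of $\triangle ABC$ as the unit circle in $\mathbb{C}$ and compute the three feet via the standard formula for orthogonal projection onto a chord; the three feet are collinear precisely when a certain determinant vanishes, and one checks this reduces to $|P|=1$. That route is routine but longer.
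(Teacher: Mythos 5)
Your argument is correct, but there is nothing in the paper to compare it against: the author states the Simson--Wallace Theorem as a known classical fact and simply cites Coxeter and Greitzer, giving no proof. Your directed-angle proof is the standard modern one, and it is well suited to this paper's needs, since it establishes both implications in a single chain of equivalences and the paper later uses both directions (the ``collinear $\Rightarrow$ concyclic'' direction in deriving Lambert's Theorem, and the ``concyclic $\Rightarrow$ collinear'' direction in Theorem \ref{thm:Lambert Converse}). The two cyclic quadrilaterals $BXPZ$ and $CXPY$ on the diameters $PB$ and $PC$ are the right auxiliary objects, the identity $\angle(XZ,XY)=\angle(AB,BP)-\angle(AC,CP)$ is computed correctly, and the final equality is indeed the directed-angle criterion for $A,B,C,P$ concyclic (here one should note that this criterion detects ``concyclic or collinear,'' which causes no trouble since $A,B,C$ are assumed to form a genuine triangle). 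The degenerate positions you flag ($P$ at a vertex or on a side line, where a foot coincides with $P$ or an auxiliary circle degenerates) do need the direct check you promise, but they are routine. In short: a correct, self-contained proof of a statement the paper leaves to the literature.
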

In general, a pedal curve is defined as the locus of orthogonal projections
of a point into the tangents of the curve. In a sense discussed in
\cite{Simson}, the parabola may be viewed as a polygon with infinitely
many vertices which satisfies the following Simson-type property:
it is the unique curve such that its pedal curve with respect to a
point is a line. The point turns out to be the focus $F$ of the parabola
and the line is the supporting line at its vertex, which we will denote
by $\Lambda$. 
\begin{thm}
\label{thm:tangent to parabola iff relates to supporting line}A line
$l$ is tangent to the parabola if and only if the orthogonal projection
of the focus $F$ into $l$ lies on the supporting line $\Lambda$. \end{thm}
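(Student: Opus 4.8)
The plan is to deduce the statement from the focal reflection property of the parabola, bridging from the directrix to $\Lambda$ by a homothety. Recall the focus--directrix description: writing $d$ for the directrix, a point $P$ lies on the parabola precisely when $PF=\operatorname{dist}(P,d)$, inside it when $PF<\operatorname{dist}(P,d)$, and outside it when $PF>\operatorname{dist}(P,d)$; recall also that $\Lambda$, being parallel to $d$ and passing through the vertex, is the image of $d$ under the homothety $h$ centered at $F$ with ratio $\tfrac12$. The intermediate fact I would establish is a two-sided \emph{reflection lemma}: a line $l$ is tangent to the parabola if and only if the mirror image $F'$ of $F$ in $l$ lies on $d$. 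For the ``if'' direction, assume $F'\in d$ and let $P_{0}$ be the point of $l$ with $\overline{P_{0}F'}\perp d$; then $P_{0}F=P_{0}F'=\operatorname{dist}(P_{0},d)$, so $P_{0}$ lies on the parabola, whereas for every other $P\in l$ the segment $\overline{PF'}$ is a non-perpendicular segment from $P$ to the line $d$, so $PF=PF'>\operatorname{dist}(P,d)$ and $P$ is exterior. Hence $l$ meets the parabola only at $P_{0}$ and is tangent there.

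For the ``only if'' direction, let $l$ be tangent at $P_{0}$, let $D_{0}$ be the foot of the perpendicular from $P_{0}$ to $d$, and study along $l$ the quadratic function $g(P)=PF^{2}-\operatorname{dist}(P,d)^{2}$. Because a tangent does not enter the (convex) interior of the parabola, $g\ge 0$ on $l$; it vanishes at $P_{0}$, and, not being identically zero, must have a double zero there, so $g'(P_{0})=0$ as well. Unwinding $g(P_{0})=0$ and $g'(P_{0})=0$ yields $P_{0}F'=P_{0}D_{0}$ and $\overline{F'D_{0}}\perp l$, where $F'$ is again the reflection of $F$ in $l$; consequently $D_{0}$ lies on the line through $F$ and $F'$ at distance $P_{0}F'$ from $P_{0}$. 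Since the foot of the perpendicular from $P_{0}$ to that line is the midpoint $N$ of $FF'$, the only two candidates for $D_{0}$ are $F'$ and $F$, and $F\notin d$ forces $D_{0}=F'\in d$. With the reflection lemma available the theorem follows at once: the orthogonal projection $N$ of $F$ onto $l$ is the midpoint of $FF'$, i.e.\ $N=h(F')$, so $l$ is tangent $\iff F'\in d\iff h(F')\in h(d)=\Lambda\iff N\in\Lambda$.

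I expect the ``only if'' half of the reflection lemma to be the one genuine obstacle; the double-root computation above is the cleanest self-contained packaging, but one could instead simply invoke the classical optical property of the parabola --- that the tangent at $P_{0}$ is the perpendicular bisector of $F$ and $D_{0}$ --- whence $F'=D_{0}\in d$ immediately and only the homothety step remains. If a purely computational fallback is wanted, one can normalize the parabola to $x^{2}=4ay$ with $F=(0,a)$ and $\Lambda$ the $x$-axis, verify that the tangent at $(2at,at^{2})$ has foot of perpendicular from $F$ equal to $(at,0)\in\Lambda$, and note that a line is determined by the foot of the perpendicular dropped onto it from $F$, so landing on $\Lambda$ characterizes the tangents; this, though, is exactly the analytic approach the paper sets out to replace.
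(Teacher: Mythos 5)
Your proof is correct, but it takes a genuinely different route from the paper's. The paper does not prove the ``only if'' direction at all --- it cites it from the literature --- and then obtains the ``if'' direction from it by a uniqueness argument: a point $P\in\Lambda$ other than the vertex is exterior to the parabola, so some tangent $\tilde l\neq\Lambda$ passes through $P$; by the cited direction the pedal point of $F$ on $\tilde l$ lies on $\Lambda$ and hence equals $\tilde l\cap\Lambda=P$, so $l$ and $\tilde l$ are both the perpendicular to $FP$ at $P$ and must coincide. You instead prove both directions from scratch out of the focus--directrix definition, via the reflection $F'$ of $F$ in $l$ and the homothety $h$ carrying the directrix $d$ to $\Lambda$ (and $F'$ to the pedal point). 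Your route buys self-containedness --- nothing is imported from the references the paper leans on --- and it yields the classical characterization ``$l$ is tangent iff $F'\in d$'' as a byproduct; the cost is the double-root computation in the ``only if'' half, which, as you note, can be replaced by the optical property of the parabola. The paper's version is shorter but rests on the quoted pedal property and on the existence of a second tangent through an exterior point. Two small points to make explicit in a final write-up: in the ``only if'' half, observe that $F\notin l$ (the focus is interior and a tangent never enters the interior), so $F\neq F'$ and the line $FF'$ is well defined; and in the ``if'' half, the point $P_0$ exists because a line whose reflection of $F$ lands on $d$ cannot be parallel to the axis, since reflecting $F$ in an axis-parallel line keeps it at the focal height.
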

\begin{proof}
For a proof of the ``only if'' statement, we refer the reader to
\cite{Simson} and \cite{Hilbert}. Let $P$ be the orthogonal projection
of $F$ into $l$ and assume that $P\in\Lambda$. If $P$ is the vertex
of $G$, then clearly $l=\Lambda$ and we are done. So assume otherwise.
Since $\Lambda$ has no points inside of the parabola, there exists
a tangent line $\tilde{l}$ to $G$ not equal to $\Lambda$ which
passes through $P$. The ``only if'' part implies that the orthogonal
projection of $F$ into $\tilde{l}$ is on $\Lambda$, and is therefore
$P$. It follows that $l=\tilde{l}$.

\begin{figure}[H]
\noindent \begin{centering}
\includegraphics[scale=0.25]{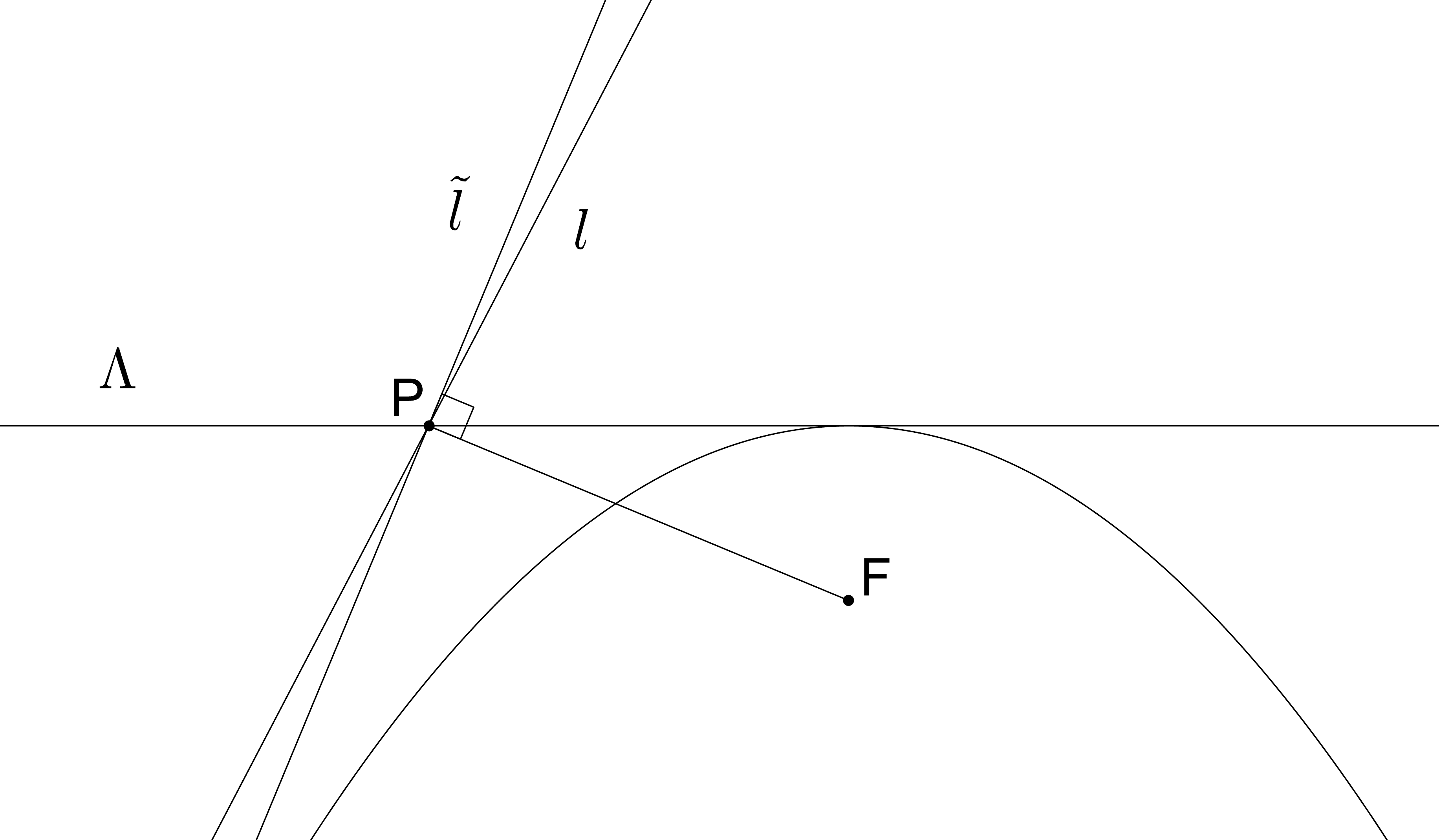}
\par\end{centering}

\caption{Proof of Theorem \ref{thm:tangent to parabola iff relates to supporting line}.}

\end{figure}

\end{proof}
Lambert's Theorem on the parabola states that the circumcircle of
a triangle formed by three tangents to the parabola always passes
through the focus. Using Theorem \ref{thm:tangent to parabola iff relates to supporting line},
we can prove the statement quite easily. Let three trangents $l_{1},l_{2},l_{3}$
to the parabola be given. Then the orthogonal projections of $F$
into $l_{1},l_{2},l_{3}$ all lie on $\Lambda$, and are therefore
collinear. By the Simson-Wallace Theorem, $F$ lies on the circumcircle
of the triangle formed from $l_{1},l_{2},l_{3}$. We introduce a converse
to Lambert's Theorem.
\begin{thm}
\label{thm:Lambert Converse}(Converse to Lambert's Theorem) Let $l_{1}$
and $l_{2}$ be two distinct lines tangent to a parabola $G$ with
focus $F$. Let $I=l_{1}\cap l_{2}$ be their intersection and consider
any circle $C$ passing through points $F$ and $I$. Let $H_{i}\in C\cap l_{i}$,
for $i=1,2$ with at least one $H_{i}\neq I$. Then the line $H_{1}H_{2}$
is tangent to $G$.\end{thm}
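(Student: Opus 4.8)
The plan is to run the excerpt's proof of Lambert's Theorem in reverse: recognize the circle $C$ as the circumcircle of a triangle whose sides are $l_{1}$, $l_{2}$ and $H_{1}H_{2}$, use the Simson--Wallace Theorem to produce three collinear pedal points of $F$, and then use Theorem \ref{thm:tangent to parabola iff relates to supporting line} to transfer tangency from $l_{1},l_{2}$ to $H_{1}H_{2}$ by way of the vertex line $\Lambda$.

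First I would clear away the degeneracies. The focus $F$ lies in the interior of $G$, whereas $I$, being the intersection of two distinct tangent lines, lies outside $G$; hence $F\neq I$. Likewise no tangent line passes through $F$, so $F\neq H_{1}$ and $F\neq H_{2}$. If $H_{1}=I$, then $H_{2}\neq I$ by hypothesis, and the line $H_{1}H_{2}$ is simply the line through $I$ and $H_{2}$, namely $l_{2}$, which is tangent to $G$ --- so there is nothing to prove; the case $H_{2}=I$ is symmetric. We may therefore assume $H_{1}\neq I\neq H_{2}$. Since $l_{1}\neq l_{2}$, the three points $I,H_{1},H_{2}$ are then pairwise distinct and non-collinear, so the circle $C$ through them is exactly the circumcircle of $\triangle IH_{1}H_{2}$, and it passes through $F$ by assumption.

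Next I would apply the Simson--Wallace Theorem (Theorem \ref{thm:(Simson-Wallace-Theorem)-Given}) to $\triangle IH_{1}H_{2}$ and the point $F$, which lies on its circumcircle and is distinct from all of its vertices: the orthogonal projections $P_{1},P_{2},P_{3}$ of $F$ onto the lines $l_{1}=IH_{1}$, $l_{2}=IH_{2}$ and $H_{1}H_{2}$ are collinear. Because $l_{1}$ and $l_{2}$ are tangent to $G$, Theorem \ref{thm:tangent to parabola iff relates to supporting line} places $P_{1}$ and $P_{2}$ on the vertex line $\Lambda$.

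The crux is to show $P_{1}\neq P_{2}$, which is where I expect the only real care to be required. If they coincided, the common point would lie on both $l_{1}$ and $l_{2}$ and hence equal $I$, forcing $FI\perp l_{1}$ and $FI\perp l_{2}$; but two distinct lines through $I$ cannot both be perpendicular to the segment $FI$ (which is well defined since $F\neq I$). So $P_{1}\neq P_{2}$, whence the Simson line of $F$ must be the line $P_{1}P_{2}=\Lambda$, and therefore $P_{3}\in\Lambda$. Applying the converse half of Theorem \ref{thm:tangent to parabola iff relates to supporting line} to the line $H_{1}H_{2}$, whose pedal point $P_{3}$ lies on $\Lambda$, we conclude that $H_{1}H_{2}$ is tangent to $G$, as desired. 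Apart from the degenerate cases and this distinctness argument, the proof is a direct combination of the two quoted theorems.
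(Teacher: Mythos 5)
Your proof is correct and takes essentially the same route as the paper's: identify $C$ as the circumcircle of $\triangle IH_{1}H_{2}$, apply the Simson--Wallace Theorem to get a collinear pedal line of $F$, pin that line down as $\Lambda$, and transfer tangency to $H_{1}H_{2}$ via Theorem \ref{thm:tangent to parabola iff relates to supporting line}. The only addition is your explicit check that the two pedal points on $l_{1}$ and $l_{2}$ are distinct (so the Simson line is forced to equal $\Lambda$), a detail the paper leaves implicit in the phrase ``a line is uniquely determined by two points.''
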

\begin{proof}
If $H_{i}=I$
for some $i$, then the statement clearly holds. So assume that $H_{i}\neq I$
for each $i$. By Theorem \ref{thm:tangent to parabola iff relates to supporting line},
the orthogonal projections of $F$ into $l_{1}$ and $l_{2}$ lie
on $\Lambda$. Since $F$ is on the circumcircle of $\triangle H_{1}H_{2}I$,
its pedal is a line (by Theorem \ref{thm:(Simson-Wallace-Theorem)-Given}).
As a line is uniquely determined by two points, this line must be
$\Lambda$. Applying Theorem \ref{thm:tangent to parabola iff relates to supporting line}
again yields that $H_{1}H_{2}$ is tangent to $G$.

\end{proof}

\begin{figure}[H]
\noindent \begin{centering}
\includegraphics[scale=0.45]{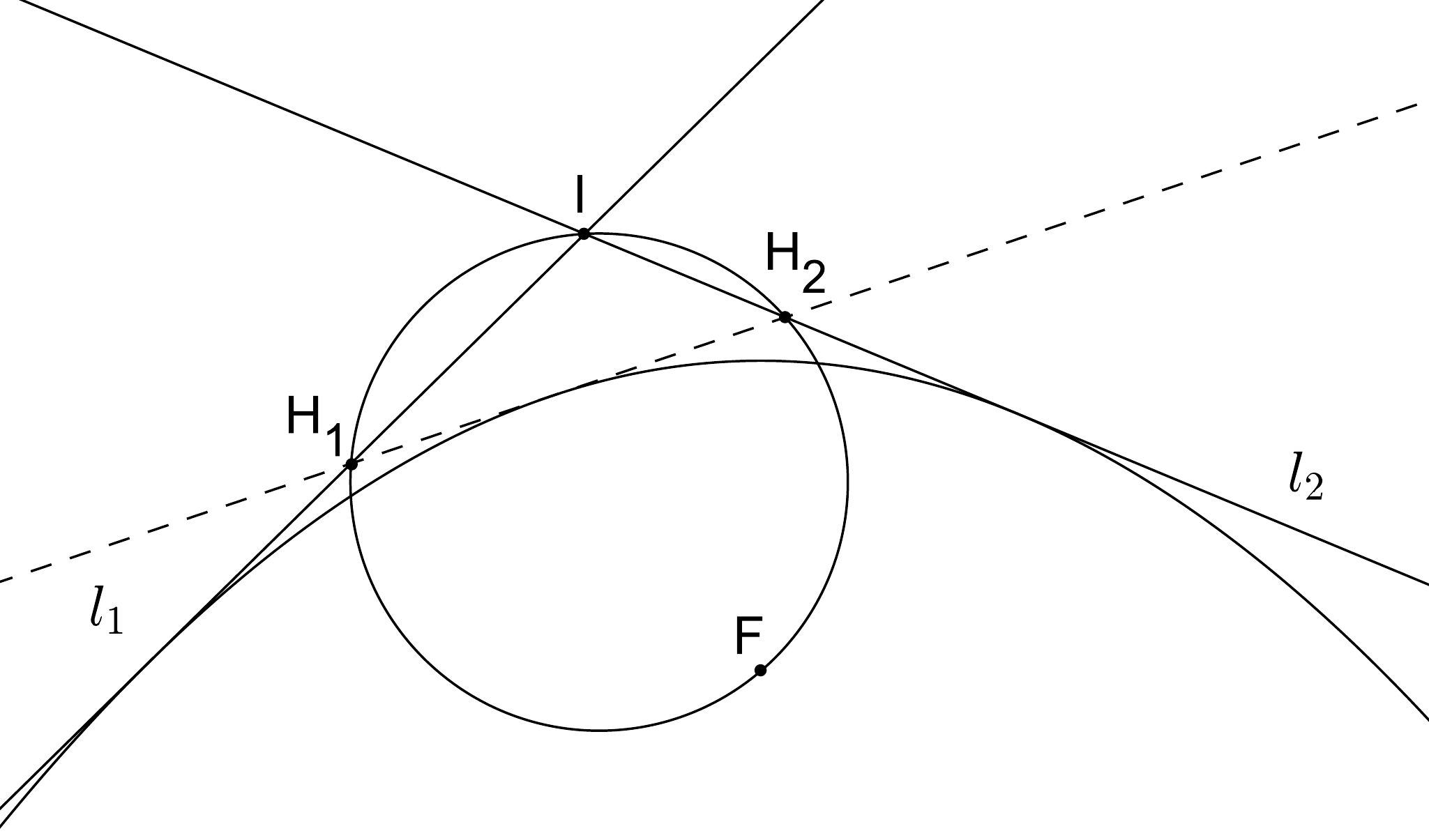}
\par\end{centering}

\caption{Illustration of Theorem \ref{thm:Lambert Converse}.}
\end{figure}

\section{Parbelos}

Recall that the latus rectum of a conic is the chord through the focus
parallel to the conic's directrix. The parbelos is constructed as
follows. Given three points $C_{1},C_{2},C_{3}$ on a line, construct
parabolas $G_{1},G_{2},G_{3}$ that open in the same direction and
whose latera recta are $C_{1}C_{2}$, $C_{2}C_{3}$ and $C_{1}C_{3}$,
respectively. The parbelos is defined as the region bounded by the three latus rectum arcs.

\begin{figure}[H]
\noindent \begin{centering}
\includegraphics[scale=0.40]{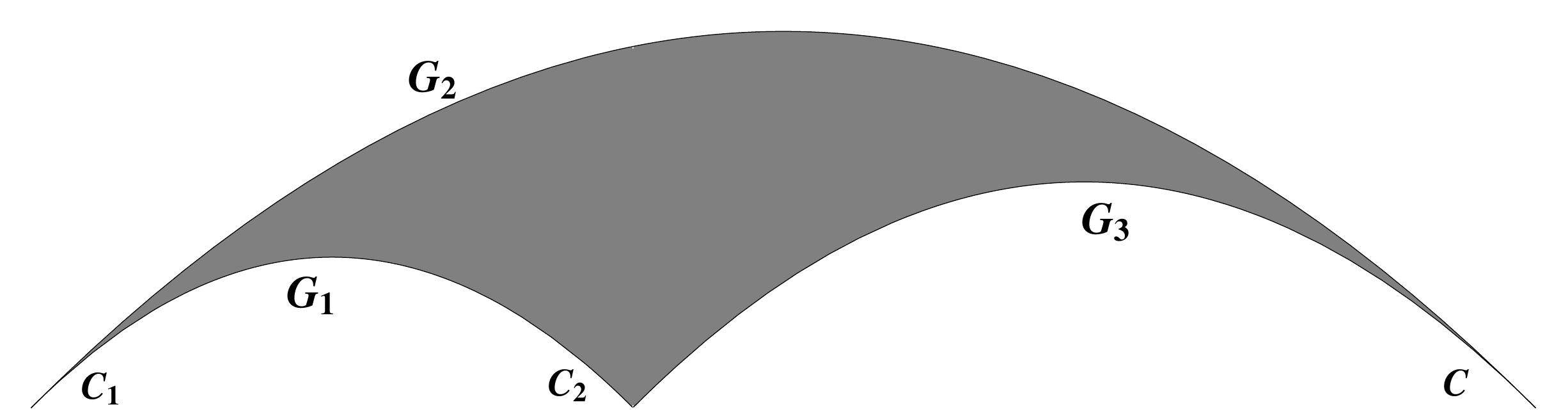}
\par\end{centering}

\end{figure}

The tangent line of a parabola at either endpoint of its latus rectum
forms an angle of $\frac{\pi}{4}$ with the latus rectum. As such,
parabolas $G_{1}$ and $G_{2}$ share the same tangent at $C_{1}$,
and similarly parabolas $G_{2}$ and $G_{3}$ share a tangent at $C_{3}$.
At cusp $C_{2}$, however, we obtain two different tangent directions.
One can extend these four tangents to form a rectangle whose vertices
are the intersections of tangent lines as in Figure \ref{fig:Sondow's-Tangency-Property:}.
We will denote the vertices of this rectangle by $C_{2},T_{1},T_{2},T_{3}$. 

In his paper \cite{Parbelos}, Sondow asks for a synthetic proof of
the following Theorem, which he proves via analytic Geometry:
\begin{thm}
(Sondow's Tangency Property) In the tangent rectangle of the parbelos,
the diagonal opposite the cusp is tangent to the upper parabola. The
contact point lies on the bisector of the angle at the cusp.\end{thm}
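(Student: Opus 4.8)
The plan is to apply the Converse to Lambert's Theorem (Theorem~\ref{thm:Lambert Converse}) to the outer parabola $G_3$, taking as the auxiliary circle the circumscribed circle of the tangent rectangle; the point to establish first is that this circle passes through the focus $F_3$ of $G_3$. Recall that $F_3$ is the midpoint of the latus rectum $C_1C_3$, so it lies on the line $C_1C_2C_3$, and that the axis of $G_3$ (like that of $G_1$ and $G_2$) is perpendicular to that line. I will use the labeling $C_2T_1T_2T_3$ of Figure~\ref{fig:Sondow's-Tangency-Property:}, so that, by the construction of the rectangle from the four tangents at the cusps and endpoints, the line $T_1T_2$ is the tangent to $G_3$ at $C_1$ (and, sharing its direction, also the tangent to $G_1$ at $C_1$), the line $T_2T_3$ is the tangent to $G_3$ at $C_3$, the line $C_2T_1$ is the tangent to $G_1$ at $C_2$, and the line $C_2T_3$ is the tangent to $G_2$ at $C_2$.

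First I would show $F_3$ lies on the circumcircle of $C_2T_1T_2T_3$. Since each diagonal of a rectangle is a diameter of its circumcircle, it suffices to prove $\angle C_2F_3T_2=\pi/2$ (the degenerate case $F_3=C_2$ of the symmetric parbelos being trivial). Now $T_2=T_1T_2\cap T_2T_3$ is the intersection of the tangents to $G_3$ at the endpoints $C_1,C_3$ of its latus rectum; since $C_1$ and $C_3$ are symmetric with respect to the axis of $G_3$, so are those two tangents, and hence $T_2$ lies on that axis. Therefore $F_3T_2\perp C_1C_3$, while $F_3C_2$ lies along $C_1C_3$, and the right angle follows.

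Next I would apply Theorem~\ref{thm:Lambert Converse} to $G_3$ with $l_1=T_1T_2$ (tangent at $C_1$), $l_2=T_2T_3$ (tangent at $C_3$), intersection $I=T_2$, and circle $C$ the circumcircle of the rectangle, which by the previous paragraph passes through both $F_3$ and $I$. Choosing $H_1=T_1\in C\cap l_1$ and $H_2=T_3\in C\cap l_2$, both distinct from $I=T_2$, the theorem gives that $T_1T_3$ is tangent to $G_3$, which is the first assertion. For the second, let $P$ be the point where $T_1T_3$ touches $G_3$. I would use the classical fact that two tangents of a parabola meet on the diameter (the line parallel to the axis) through the midpoint of the chord joining the two points of tangency --- immediate after normalizing the parabola to $y=x^2$ --- twice, both times at the point $T_1$: for $G_3$, $T_1$ is the meet of the tangents at $C_1$ and at $P$, so $T_1$ lies on the $G_3$-diameter through the midpoint of $C_1P$; for $G_1$, $T_1$ is the meet of the tangents at $C_1$ and at $C_2$, so $T_1$ lies on the $G_1$-diameter through the midpoint of $C_1C_2$. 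As $G_1$ and $G_3$ have parallel axes, these are two lines through $T_1$ with a common direction, hence the same line; so the midpoints of $C_1P$ and of $C_1C_2$ lie on one line parallel to the axis, and subtracting, $C_2P$ is parallel to the axis. Finally, the two cusp tangents $C_2T_1$ and $C_2T_3$ each make the angle $\pi/4$ with $C_1C_2C_3$ on opposite sides of the perpendicular to $C_1C_2C_3$ at $C_2$, so that perpendicular bisects the angle at the cusp; being parallel to the axis, it contains $P$, as claimed.

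I expect the genuine obstacle to be the first move: recognizing that the relevant circle for the Converse to Lambert's Theorem is the circumcircle of the tangent rectangle, and spotting the cheap reason it passes through $F_3$, namely that $F_3$ subtends a right angle on the diagonal $C_2T_2$ because $F_3$ lies on the base line and $T_2$ on the axis of $G_3$. Everything after that is bookkeeping: matching each side of the rectangle to the correct tangent, verifying the hypotheses of Theorem~\ref{thm:Lambert Converse}, and calling on the standard tangent-intersection property of the parabola to locate the contact point.
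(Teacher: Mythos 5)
Your proof is correct. For the main claim it follows the paper's route exactly: both arguments hinge on showing that the focus of the outer parabola lies on the circumcircle of the tangent rectangle and then invoking Theorem \ref{thm:Lambert Converse} with $l_{1},l_{2}$ the tangents at $C_{1},C_{3}$, $I=T_{2}$, and $H_{1}=T_{1}$, $H_{2}=T_{3}$. You justify the focus--circle incidence by Thales' theorem on the diameter $C_{2}T_{2}$, using that $T_{2}$ lies on the axis of the outer parabola; the paper gets the same incidence by circumscribing a square about the rectangle and appealing to symmetry, so this step is a cosmetic (and arguably cleaner) variation rather than a new idea. Where you genuinely diverge is in locating the contact point. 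The paper drops the cusp bisector perpendicular to $C_{1}C_{3}$, intersects it with the directrix at a point $H$, and argues largely from the figure that the intersection $T$ of the bisector with $T_{1}T_{3}$ satisfies $FT=HT$, so that $T$ lies on the parabola by the focus--directrix property and is therefore the contact point. You instead apply, twice at $T_{1}$, the classical fact that two tangents of a parabola meet on the diameter through the midpoint of their chord of contact (once for $G_{3}$ with chord $C_{1}P$, once for $G_{1}$ with chord $C_{1}C_{2}$), conclude that $C_{2}P$ is parallel to the common axis direction, and observe that the cusp bisector is exactly the line through $C_{2}$ in that direction. This buys you a fully articulated argument where the paper waves at a figure, at the cost of importing one extra standard lemma about the parabola; both routes are sound.
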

\begin{proof}
Let us inscribe the tangent rectangle $r=T_{1}T_{2}T_{3}C_{2}$ in
another rectangle $R$ whose sides are parallel and orthogonal to
$C_{1}C_{3}$. At the cusp $C_{2}$, the angles formed between $C_{1}C_{3}$
and $C_{2}T_{1}$, $C_{2}T_{3}$ are $\frac{\pi}{4}$ and $\frac{3\pi}{4}$.
Using right triangles, it is easy to see that $R$ must be a square
and its center $O$ is the same as that of $r$. 

\begin{figure}[H]
\begin{centering}
\includegraphics[scale=0.3]{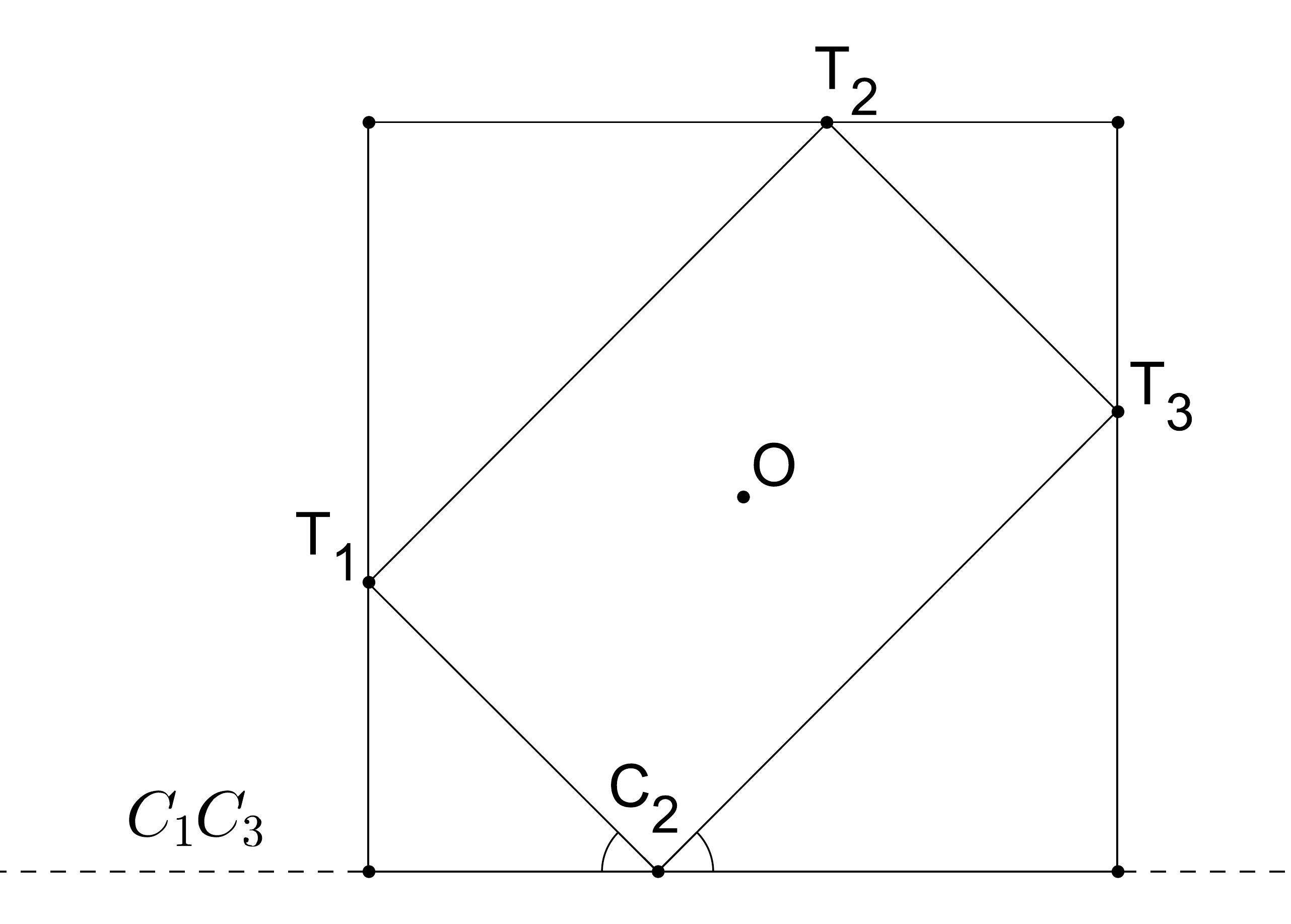}
\par\end{centering}

\caption{\label{fig:billiard table}Rectangle $R$ circumscribing the tangent
rectangle $r$.}

\end{figure}

Consider the circumscribing circle of $r$.

\begin{figure}[H]
\begin{centering}
\includegraphics[scale=0.3]{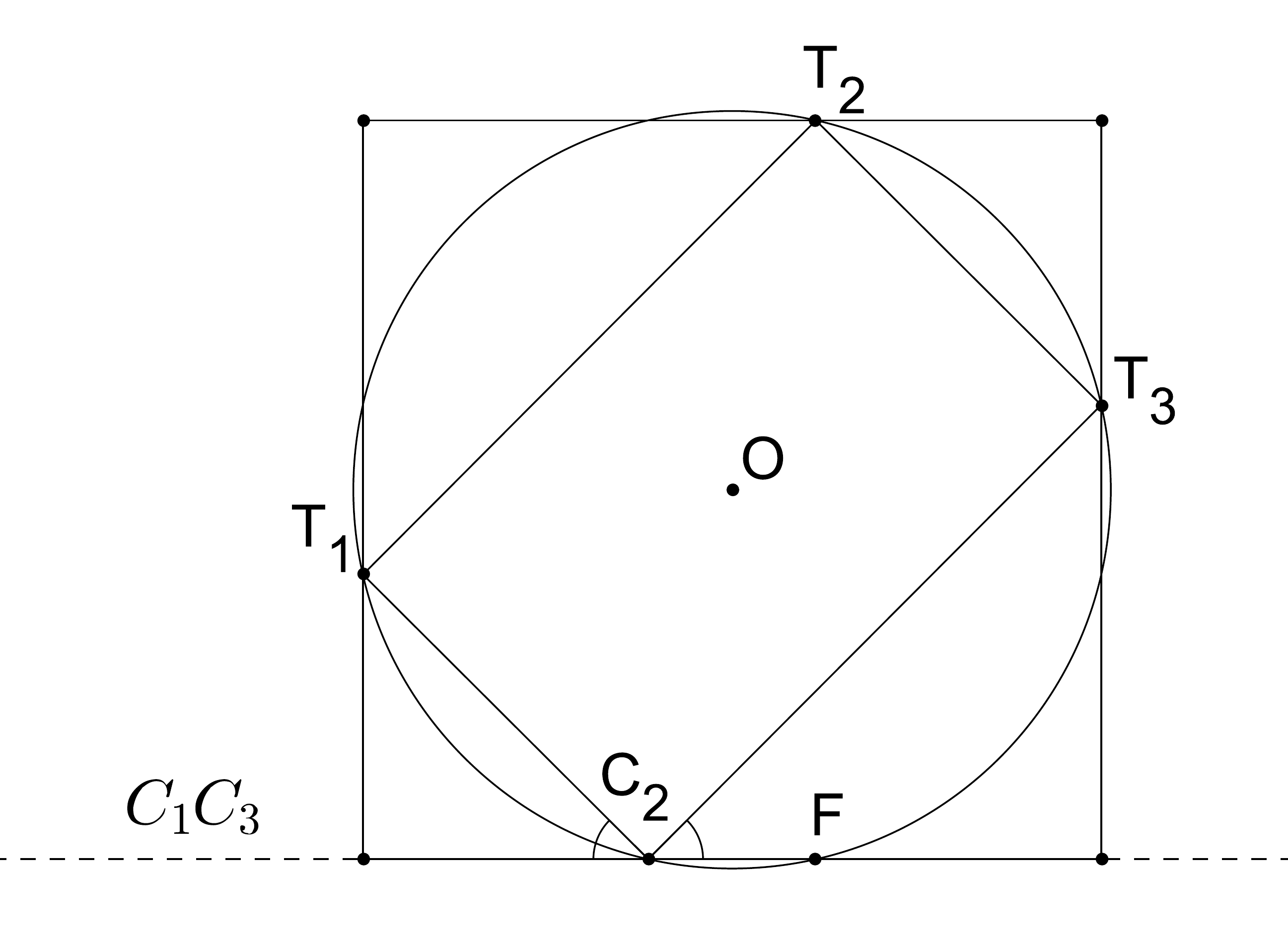}
\par\end{centering}

\caption{The angles at the cusp $C_{2}$ are equal.}
\end{figure}

Since its center is $O$, by symmetry it intersects $C_{1}C_{3}$
at a point $F$, such that $F$ is the orthogonal projection of $T_{2}$
down to $C_{1}C_{3}$. This point is the focus of the outer parabola.
Since $F,C_{2},T_{1},T_{2},T_{3}$ lie on a circle, Theorem \ref{thm:Lambert Converse}
implies that $T_{1}T_{3}$ is tangent to the parabola. 

As for the angle bisector at the cusp $C_{2}$, the billiard angle
property implies that it is orthogonal to $C_{1}C_{3}$ (see Figure
\ref{fig:angle bisector in billiard square}). Let $H$ be the intersection
of the angle bisector with the top of the rectangle (i.e., with the
directrix of the outer parabola).
We would like to see that $FT=HT$. 

\begin{figure}[H]
\begin{centering}
\includegraphics[scale=0.28]{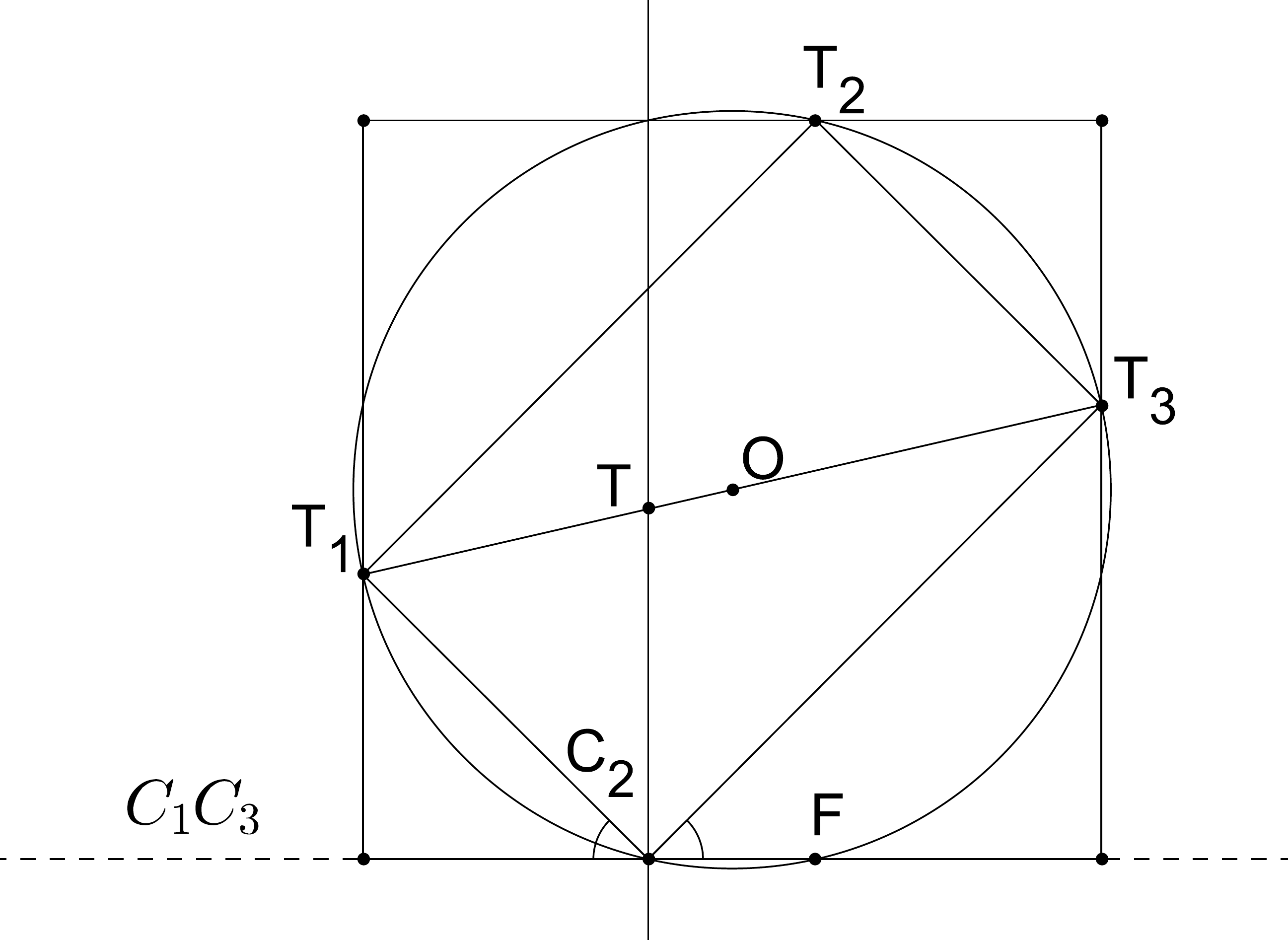}
\includegraphics[scale=0.28]{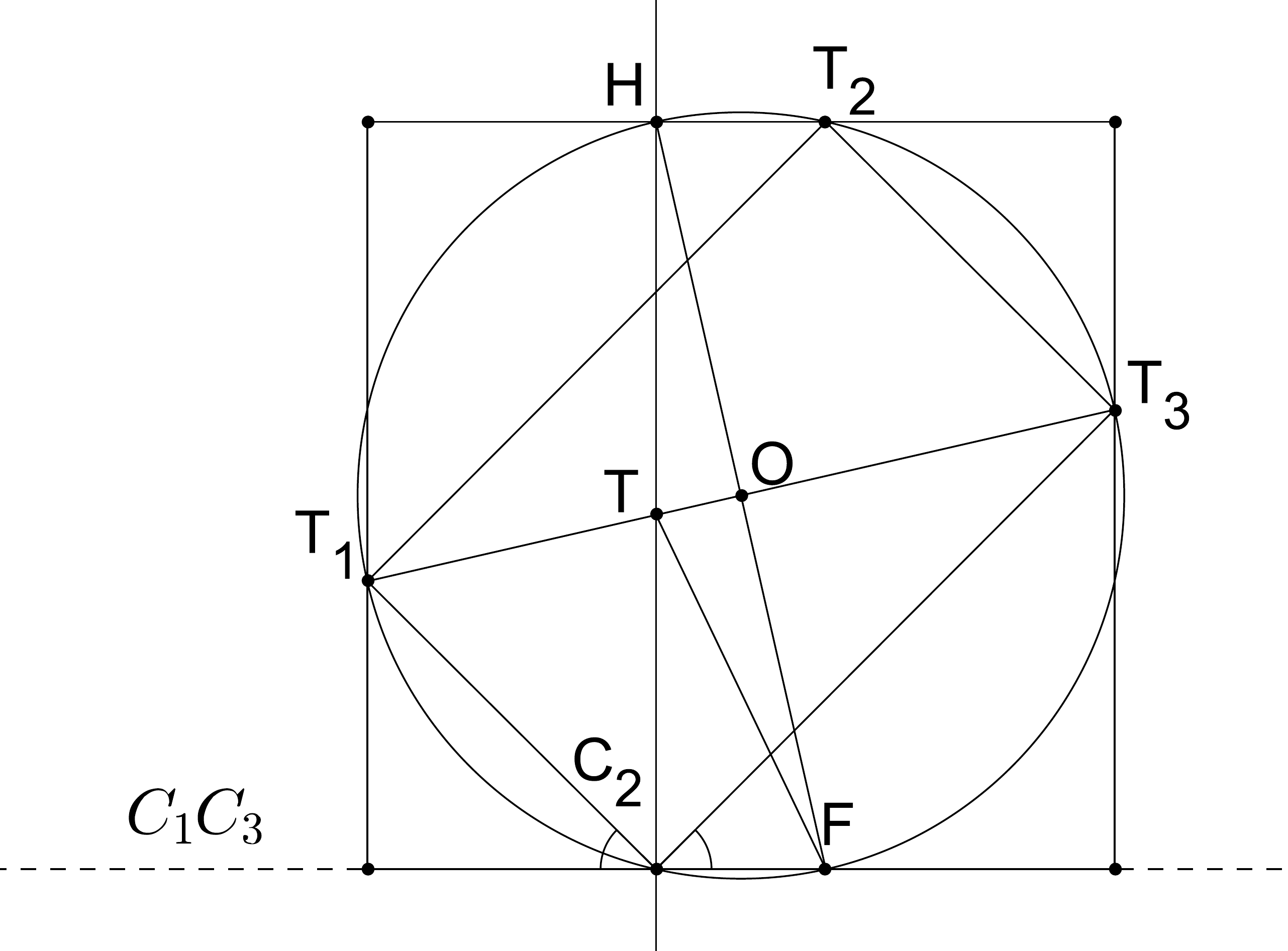}
\par\end{centering}

\caption{\label{fig:angle bisector in billiard square}The angle bisector at
cusp $C_{2}$ is orthogonal to $C_{1}C_{3}$.}
\end{figure}

This is not too hard to see from the Figure \ref{fig:angle bisector in billiard square} on the right. It also follows from the diagram that $F$ is equidistant from $T_{1}$
and $T_{3}$. \end{proof}
\begin{rem}
One way to look at the configuration in Figure \ref{fig:billiard table}
is as a $4$-periodic billiard trajectory in a square billiard table
(for an exposition on billiards in polygons, see \cite{Tabachnikov}).
It would be interesting to see whether there is a deeper connection
between (p)arbelos and billiards. 
\end{rem}
Let $A_{1}$ be the intersection of the axis of symmetry of $G_{1}$
with the directrix of $G_{3}$ (i.e., the line parallel to $C_{1}C_{3}$
and passing through $T_{3}$). Define $A_{3}$ similarly. Notice that the vertical sides of the rectangle $R$ are the axes of symmetry of $G_1$ and $G_3$, while the horizontal sides are $C_1 C_3$ and the directrix of the outer parabola. As a consequence, we obtain the following new
properties for the parbelos.
\begin{cor}
1. The focus $F$ of the outer parabola is equidistant from vertices
$T_{1}$ and $T_{3}$ of the tangent rectangle.

2. The intersection $H$ of the angle bisector at cusp $C_{2}$ and
the directrix of the outer parabola lies on the circumcircle $F,C_{2},T_{1},T_{2},T_{3}$
of the tangent rectangle.

3. This point $H$ is equidistant from vertices $T_{1}$ and $T_{3}$.

4. Points $A_{1}$ and $A_{3}$ lie on circle $F,C_{2},T_{1},T_{2},T_{3}$.

5. Point $A_{1}$ is equidistant from $C_{2}$ and $T_{2}$ and so
is point $A_{3}$. 
\end{cor}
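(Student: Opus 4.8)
Proof proposal.

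\smallskip

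The plan is to read all five statements off the circle $\Gamma$, the circumcircle of the tangent rectangle $r=C_2T_1T_2T_3$ --- so $\Gamma$ passes through $F,C_2,T_1,T_2,T_3$ and is centred at the common centre $O$ of $r$ and of the circumscribing square $R$ --- together with the two axes of symmetry of $R$: write $v$ for the axis of $R$ perpendicular to $C_1C_3$ and $h$ for the one parallel to $C_1C_3$, both through $O$. From the proof of Sondow's property the following are at hand: $F$ is the second point of $\Gamma\cap C_1C_3$, so $F=\mathrm{refl}_v(C_2)$; the bisector of the angle at the cusp $C_2$ is the perpendicular to $C_1C_3$ at $C_2$; the two sides of $R$ parallel to $C_1C_3$ are the line $C_1C_3$ and the directrix of the outer parabola, the two sides of $R$ perpendicular to $C_1C_3$ are the axes of $G_1$ and $G_2$ (the perpendiculars to $C_1C_3$ at the midpoints of $C_1C_2$ and $C_2C_3$), and the vertices $T_3,T_1$ of $r$ lie on the axes of $G_1,G_2$ respectively. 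I also use the elementary facts that each $G_i$ has its directrix at distance $\tfrac12(\text{latus-rectum length})$ from $C_1C_3$, and that a tangent to a parabola at an endpoint of its latus rectum makes a $45^{\circ}$ angle with it.

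For Claims 1--3: the bisector of the angle at $C_2$ is a chord of $\Gamma$ through $C_2$ perpendicular to $C_1C_3$, so its midpoint lies on $h$ and its other endpoint is $\mathrm{refl}_h(C_2)$, which lies on the side of $R$ opposite $C_1C_3$, i.e.\ on the directrix of the outer parabola --- so that endpoint is $H$. This gives Claim 2 ($H\in\Gamma$); and as $F=\mathrm{refl}_v(C_2)$ while $H=\mathrm{refl}_h(C_2)$, the points $F,H$ are exchanged by the half-turn about $O$, whence $FH$ is a diameter of $\Gamma$. Moreover the bisector at $C_2$ bisects the right angle $\angle T_1C_2T_3$ of $r$ and crosses the segment $T_1T_3$, so $H$ is the midpoint of the arc $T_1T_3$ not containing $C_2$; hence $HT_1=HT_3$ (Claim 3), the point $H$ lies on the perpendicular bisector of $T_1T_3$, and so does its antipode $F$, giving $FT_1=FT_3$ (Claim 1). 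Equivalently, $FH\perp T_1T_3$.

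For Claims 4--5: by definition $A_1$ lies on the axis of $G_1$ (a side of $R$) at the level of the directrix of the other inner parabola $G_2$, i.e.\ at distance $\tfrac12|C_2C_3|$ from $C_1C_3$; and the vertex $T_3$, which lies on that same side of $R$, sits at distance $\tfrac12|C_1C_2|$ from $C_1C_3$ (the $45^{\circ}$ tangent to $G_1$ at $C_2$ rises $\tfrac12|C_1C_2|$ over the horizontal run $\tfrac12|C_1C_2|$ from $C_2$ to the axis of $G_1$). Since that side of $R$ has length $\tfrac12|C_1C_3|=\tfrac12|C_1C_2|+\tfrac12|C_2C_3|$, those two distances add to the full side, so $A_1=\mathrm{refl}_h(T_3)$; symmetrically $A_3=\mathrm{refl}_h(T_1)$, while $\mathrm{refl}_h(C_2)=H$ and $\mathrm{refl}_h(T_2)=F$ (the latter because $F$ is the foot of the perpendicular from $T_2$ to $C_1C_3$). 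Applying the isometry $\mathrm{refl}_h$, which fixes $O$ and preserves $\Gamma$, to the perpendicular pair of diameters $\{T_1T_3,\,FH\}$ of the previous paragraph yields the perpendicular pair $\{A_1A_3,\,C_2T_2\}$: thus $A_1A_3$ is a diameter of $\Gamma$ perpendicular to the chord $C_2T_2$. Hence $A_1,A_3\in\Gamma$ (Claim 4), and each of $A_1,A_3$, lying on the perpendicular bisector of $C_2T_2$, is equidistant from $C_2$ and $T_2$ (Claim 5). In other words, $C_2A_3T_2A_1$ is a square inscribed in $\Gamma$.

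The step I expect to be fussiest is pinning down the equality $A_1=\mathrm{refl}_h(T_3)$ --- reconciling the description of $A_1$ as an intersection of the axis of one inner parabola with the directrix of the other, with its description as the reflection of a vertex of $r$ in $h$; this rests on the two bookkeeping facts that $C_2$ splits the bottom side of $R$ in the ratio $|C_1C_2|:|C_2C_3|$ and that each directrix sits at half its latus-rectum length from $C_1C_3$. Once the four diameters $C_2T_2$, $T_1T_3$, $FH$ and $A_1A_3$ of $\Gamma$ are identified --- with $FH\perp T_1T_3$ and $C_2T_2\perp A_1A_3$ --- the five parts of the corollary are exactly the incidence and equidistance relations encoded by these two perpendicular pairs of diameters; as a cross-check, all of it also drops out of a short computation after placing $C_1,C_2,C_3$ on the $x$-axis.
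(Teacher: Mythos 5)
Your proof is correct and takes essentially the route the paper intends: all five statements are read off the circumcircle of the tangent rectangle together with the symmetry axes of the circumscribing square $R$, the paper itself leaving them as facts to be ``seen from the figure'' while you supply the explicit reflection and perpendicular-diameter argument (including the one genuinely fussy step, $A_1=\mathrm{refl}_h(T_3)$, via the bookkeeping $\tfrac12|C_1C_2|+\tfrac12|C_2C_3|=\tfrac12|C_1C_3|$). Note also that you have correctly disambiguated the paper's garbled definition of $A_1$: the only reading under which claims 4 and 5 hold is yours, namely the intersection of the axis of $G_1$ with the directrix of the \emph{inner} parabola $G_2$ (the horizontal line through the vertex of $r$ on the axis of $G_2$), not with the directrix of $G_3$ as the text literally says.
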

\begin{figure}[H]
\begin{centering}
\includegraphics[scale=0.4]{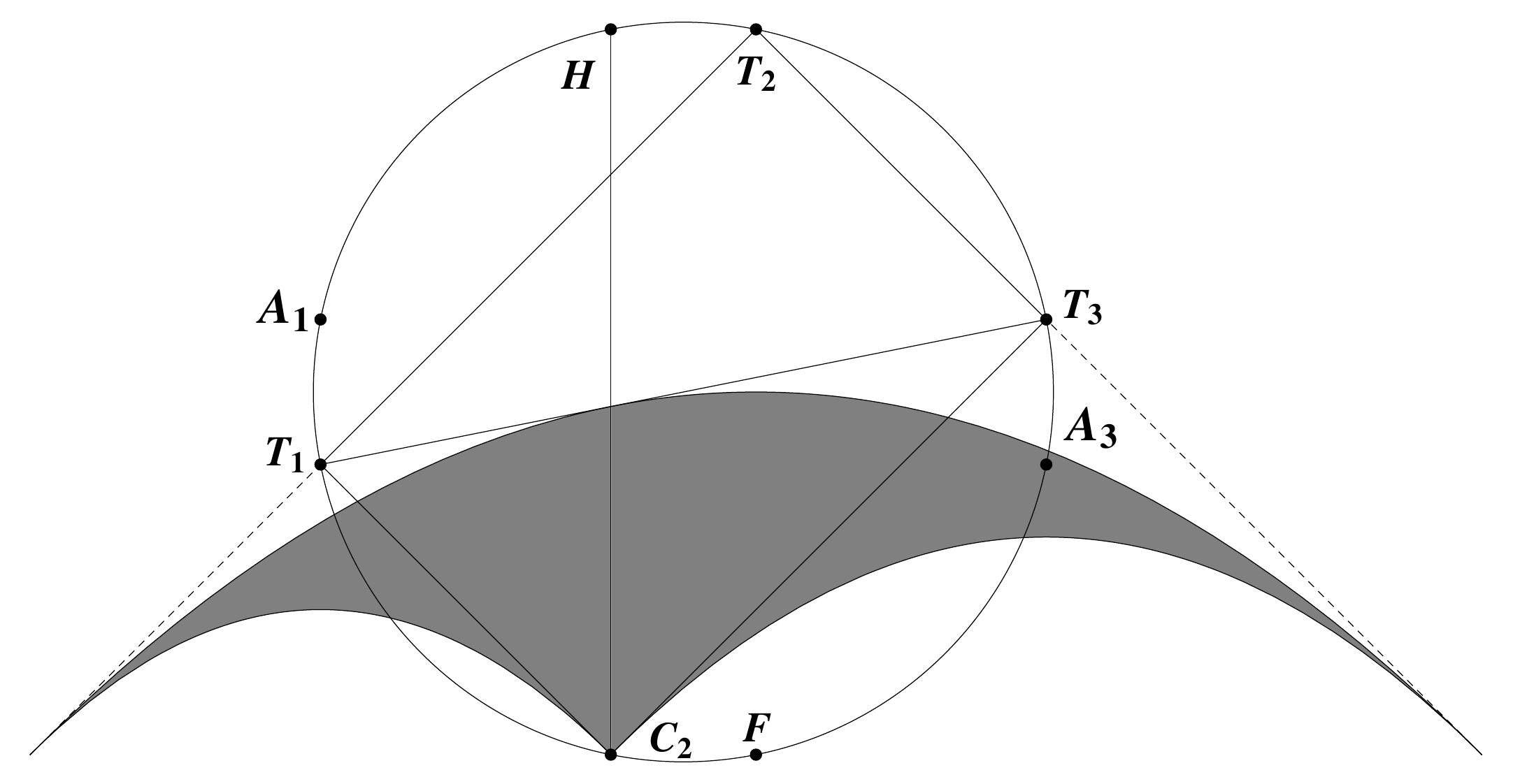}
\par\end{centering}

\caption{The circumcircle of the tangent rectangle and notable points lying
on it.}

\end{figure}

\begin{onehalfspace}
$ $

Emmanuel Tsukerman: Stanford University

\textit{E-mail address: emantsuk@stanford.edu}\end{onehalfspace}

\end{document}